\DeclareMathOperator{\Vol}{Vol}
\DeclareMathOperator{\Ric}{Ric}
\DeclareMathOperator{\can}{can}
\newcommand{\s}{\mathbb{S}}
\newcommand{\R}{\mathbb{R}}
\newcommand{\E}{\mathcal{E}}
\newcommand{\M}{\mathcal{M}}
\newcommand{\Y}{\mathcal{Y}}
\newtheorem{theorem}{Theorem}[section]
\newtheorem{proposition}[theorem]{Proposition}
\newtheorem{lemma}[theorem]{Lemma}
\newtheorem{remark}[theorem]{Remark}
\author{Abraão Mendes}
\email{abraao.mendes@im.ufal.br}
\address{Institute of Mathematics, Federal University of Alagoas, Maceió, AL, 57072-970, Brazil}
\date{\today}
\title[Rigidity of volume-minimizing hypersurfaces in 5-manifolds]{Rigidity of volume-minimizing hypersurfaces in Riemannian 5-manifolds}
\thanks{The author is grateful to Fernando C. Marques, Marcos P. Cavalcante, Feliciano Vitório, and Ezequiel Barbosa for their kind interest in this work. The author was financially supported by CAPES Foundation, Ministry of Education of Brazil.}
\begin{document}

\begin{abstract}
In this paper we generalize the main result of \cite{BarrosBatistaCruzSousa} for manifolds that are not necessarily Einstein. In fact, we obtain an upper bound for the volume of a locally volume-minimizing closed hypersurface $\Sigma$ of a Riemannian 5-manifold $M$ with scalar curvature bounded from below by a positive constant in terms of the total traceless Ricci curvature of $\Sigma$. Furthermore, if $\Sigma$ saturates the respective upper bound and $M$ has nonnegative Ricci curvature, then $\Sigma$ is isometric to $\s^4$ up to scaling and $M$ splits in a neighborhood of $\Sigma$. Also, we obtain a rigidity result for the Riemannian cover of $M$ when $\Sigma$ minimizes the volume in its homotopy class and saturates the upper bound.
\end{abstract}

\maketitle

\section{Introduction}

A classical result due to Toponogov \cite{Toponogov} says that the length of any closed simple geodesic $\gamma$ on a closed Riemannian surface $M^2$ satisfies 
\begin{eqnarray*}
L(\gamma)^2\inf_MK\le4\pi^2,
\end{eqnarray*} 
where $K$ is the Gaussian curvature of $M$. Furthermore, if equality holds, then $M^2$ is isometric to the standard unit 2-sphere $\s^2\subset\R^3$ up to scaling (see \cite{HangWang} for a different proof). 

A similar result could be imagined for minimal 2-spheres, instead of closed simple geodesics, in dimension 3. But, it turns out that there is no area bound for minimal 2-spheres in Riemannian 3-manifolds, as pointed out by Marques and Neves \cite{MarquesNeves}. Therefore, an extra hypothesis is needed.

It is well known that if $\Sigma^2$ is a stable minimal 2-sphere in a Riemannian 3-manifold $M^3$, then the area of $\Sigma$ satisfies
\begin{eqnarray}\label{eq.Bray.Brendle.Neves}
A(\Sigma)\left(\frac{\inf_MR}{2}\right)\le4\pi,
\end{eqnarray}
where $R$ is the scalar curvature of $M$. Moreover, if equality holds, then $\Sigma$ is totally geodesic and $R$ is constant equal to $\inf_MR$ on $\Sigma$. If we further assume that $\Sigma$ is locally area-minimizing, then  equality in \eqref{eq.Bray.Brendle.Neves} implies $M$ to be isometric to $(-\varepsilon,\varepsilon)\times\s^2$ up to scaling in a neighborhood of $\Sigma$, suposing that $\Sigma$ is embedded in $M$. This can be seen as a consequence of Bray, Brendle, and Neves' work \cite{BrayBrendleNeves} (see \cite{MicallefMoraru} for an alternative proof).

In dimension $n\ge3$, it is not difficult to construct manifolds $\Sigma^n$ with scalar curvature $R_\Sigma\ge\alpha_n>0$, for some constant $\alpha_n$ depending only on $n$, and arbitrarily large volume. For example, consider $\Sigma_r^n=\s^{n-1}\times S^1(r)$, where $\s^{n-1}\subset\R^n$ is the standard unit $(n-1)$-sphere and $S^1(r)\subset\R^2$ is the circle of radius $r>0$. Clearly, $R_{\Sigma_r}=(n-1)(n-2)$ and $\Vol(\Sigma_r)\longrightarrow\infty$ as $r\longrightarrow\infty$. However, these manifolds are not diffeomorphic to $\s^n$. 

For the spherical case, Gromov and Lawson \cite{GromovLawson} developed a method which permits to construct metrics on $\Sigma^n=S^n$ with scalar curvature $R_\Sigma\ge n(n-1)$ and arbitrarily large volume if $n\ge3$.

These examples show that the analogous inequality to \eqref{eq.Bray.Brendle.Neves} is not true in general in dimension $n\ge3$ even for volume-minimizing hypersurfaces, as we can see taking $M^{n+1}=\R\times\Sigma^n$ with $\Sigma^n$ as above.

Bearing this in mind, Barros, Batista, Cruz, and Sousa \cite{BarrosBatistaCruzSousa} considered the case of Einstein 4-manifolds embedded in Riemannian 5-manifolds which minimize the volume in their homotopy classes. They proved:

\begin{theorem}[Barros-Batista-Cruz-Sousa]\label{theorem.BarrosBatistaCruzSousa}
Let $M^5$ be a complete Riemannian manifold with positive scalar curvature and nonnegative Ricci curvature. Suppose that $\Sigma^4$ is a two-sided closed Einstein manifold embedded in $M^5$ in such a way that $\Sigma$ minimizes the volume in its homotopy class. Then, the volume of $\Sigma$ satisfies
\begin{eqnarray}\label{eq.BarrosBatistaCruzSousa}
\Vol(\Sigma)^{1/2}\left(\frac{\inf_MR}{12}\right)\le\Vol(\s^4)^{1/2}.
\end{eqnarray}
Moreover, if equality holds, then $\Sigma$ is isometric to $\s^4$, $M$ is isometric to $(-\varepsilon,\varepsilon)\times\s^4$ in a neighborhood of $\Sigma$, and the Riemannian cover of $M$ is isometric to $\R\times\s^4$, up to scaling.
\end{theorem}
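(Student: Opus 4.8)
The plan is to derive \eqref{eq.BarrosBatistaCruzSousa} from the sharp comparison $Y(N,[g])\le Y(\s^4)$ for the Yamabe constant together with the rigidity of Einstein metrics inside their conformal class. Since $\Sigma$ minimizes volume in its homotopy class it is, in particular, a closed two-sided \emph{stable} minimal hypersurface, so for every $f\in C^\infty(\Sigma)$
\[
\int_\Sigma |\nabla f|^2\,dV_\Sigma\ \ge\ \int_\Sigma\bigl(\Ric_M(\nu,\nu)+|A|^2\bigr)f^2\,dV_\Sigma ,
\]
where $\nu$ is a unit normal and $A$ the second fundamental form. Taking $f\equiv1$ and using the Gauss equation $\Ric_M(\nu,\nu)=\tfrac12(R_M-R_\Sigma-|A|^2)$, valid because $\Sigma$ is minimal, gives $0\ge\int_\Sigma\bigl(\tfrac12(R_M-R_\Sigma)+\tfrac12|A|^2\bigr)\,dV_\Sigma$, whence $\int_\Sigma R_\Sigma\ge\int_\Sigma R_M+\int_\Sigma|A|^2\ge(\inf_M R)\Vol(\Sigma)$. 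As $\Sigma^4$ is Einstein its scalar curvature $R_\Sigma$ is constant, so the last inequality yields $R_\Sigma\ge\inf_M R>0$.

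Next I would invoke two classical facts. First, an Einstein metric of positive scalar curvature realizes the Yamabe constant of its conformal class (Obata's theorem together with the resolution of the Yamabe problem), so $Y(\Sigma,[g_\Sigma])=\Vol(\Sigma)^{-1/2}\int_\Sigma R_\Sigma\,dV_\Sigma=R_\Sigma\Vol(\Sigma)^{1/2}$. Second, Aubin's inequality $Y(\Sigma,[g_\Sigma])\le Y(\s^4)=12\,\Vol(\s^4)^{1/2}$. Combined with $R_\Sigma\ge\inf_M R$ these give
\[
(\inf_M R)\Vol(\Sigma)^{1/2}\ \le\ R_\Sigma\Vol(\Sigma)^{1/2}\ =\ Y(\Sigma,[g_\Sigma])\ \le\ 12\,\Vol(\s^4)^{1/2},
\]
which is exactly \eqref{eq.BarrosBatistaCruzSousa}. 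If equality holds, then $R_\Sigma=\inf_M R$ and $Y(\Sigma,[g_\Sigma])=Y(\s^4)$; by the rigidity case of Aubin's inequality $(\Sigma,[g_\Sigma])$ is conformally diffeomorphic to the round sphere, and since (Obata again) the only Einstein metric in the conformal class of the round $\s^4$ is the round metric itself up to scaling, $\Sigma$ is isometric to $\s^4$ up to scaling. Going back to the $f\equiv1$ computation, with $R_\Sigma=\inf_M R\le R_M$ the integrand $\tfrac12(R_M-R_\Sigma)+\tfrac12|A|^2$ is nonnegative and integrates to zero, so $A\equiv0$ (that is, $\Sigma$ is totally geodesic), $R_M\equiv\inf_M R$ on $\Sigma$, and by the Gauss equation once more $\Ric_M(\nu,\nu)\equiv0$ along $\Sigma$.

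For the local splitting I would carry out the classical constant-mean-curvature deformation (as in \cite{BrayBrendleNeves}, \cite{MicallefMoraru}). Since $\Sigma$ is totally geodesic with $\Ric_M(\nu,\nu)\equiv0$, its Jacobi operator is $\Delta_\Sigma$, whose kernel is the constants, so the implicit function theorem produces a foliation $\{\Sigma_t\}_{|t|<\varepsilon}$ of a neighborhood of $\Sigma$ in $M$ by closed hypersurfaces of constant mean curvature $H(t)$, with $\Sigma_0=\Sigma$, positive lapse $\rho_t$, and $H(0)=0$. Dividing the linearized mean-curvature identity $H'(t)=-\Delta_{\Sigma_t}\rho_t-\bigl(\Ric_M(\nu_t,\nu_t)+|A_t|^2\bigr)\rho_t$ by $\rho_t$ and integrating over $\Sigma_t$,
\[
H'(t)\int_{\Sigma_t}\frac{dV_t}{\rho_t}=-\int_{\Sigma_t}\frac{|\nabla\rho_t|^2}{\rho_t^2}\,dV_t-\int_{\Sigma_t}\bigl(\Ric_M(\nu_t,\nu_t)+|A_t|^2\bigr)\,dV_t\ \le\ 0
\]
by the hypothesis $\Ric_M\ge0$; hence $H$ is nonincreasing. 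Since $\frac{d}{dt}\Vol(\Sigma_t)=H(t)\int_{\Sigma_t}\rho_t\,dV_t$, it follows that $\Vol(\Sigma_t)\le\Vol(\Sigma)$ for all $t$, while the local minimality of $\Sigma$ gives the reverse inequality; therefore $\Vol(\Sigma_t)$ is constant, $H\equiv0$, and the displayed identity then forces $\nabla\rho_t\equiv0$ and $A_t\equiv0$ for every $t$. Normalizing $\rho_t\equiv1$, the metric near $\Sigma$ is $dt^2+g_t$ with $\partial_t g_t=2A_t=0$, so $M$ is isometric to $(-\varepsilon,\varepsilon)\times\s^4$, up to scaling, in a neighborhood of $\Sigma$.

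Finally, each leaf $\Sigma_t$ is freely homotopic to $\Sigma$ and has the same (minimizing) volume, hence it too minimizes volume in its homotopy class, so it is stable and — by the argument just given — totally geodesic with $\Ric_M(\nu_t,\nu_t)\equiv0$. Consequently the set of $t$ for which the normal exponential flow $\Phi(t,x)=\exp_x(t\nu(x))$ is a local isometry onto its image, with all leaves minimal, totally geodesic and isometric to $g_\Sigma$, is open, and it is closed by $C^\infty$-compactness of the leaves (all isometric to a fixed closed manifold, with vanishing mean curvature, inside a fixed compact part of $M$); by completeness of $M$ it is therefore all of $\R$. Thus $\Phi$ is a local isometry $(\R\times\s^4,\,dt^2+g_\Sigma)\to M$ with complete, simply connected domain, hence the universal Riemannian cover of $M$, and that cover is isometric to $\R\times\s^4$ up to scaling. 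The genuinely delicate point is this last global step — ensuring that the constant-mean-curvature foliation continues past every finite time so that the continuation closes up; the inequality itself and the identification of $\Sigma$ are short consequences of the stability inequality combined with standard facts about the Yamabe constant.
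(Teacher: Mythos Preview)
Your argument is correct. Note first that the paper quotes this theorem from \cite{BarrosBatistaCruzSousa} and does not prove it directly; what the paper proves are the generalizations (Proposition~\ref{main.proposition}, Theorems~\ref{main.theorem} and~\ref{main.theorem.2}), which recover Theorem~\ref{theorem.BarrosBatistaCruzSousa} when $\mathring{\Ric_\Sigma}=0$. Compared with those, your route to the inequality is different and more direct: you use the Einstein hypothesis immediately, via Obata's theorem (an Einstein metric of positive scalar curvature realizes the Yamabe constant of its conformal class) and Aubin's bound $\Y(\Sigma,[g_\Sigma])\le\Y(\s^4,[g_{\can}])=12\,\Vol(\s^4)^{1/2}$, whereas the paper --- aiming to drop the Einstein assumption --- goes through the Gauss--Bonnet--Chern formula and Gursky's inequalities to control $\int_\Sigma R_\Sigma^2$ and $\int_\Sigma|W_\Sigma|^2$. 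Your approach is the natural short one under the Einstein hypothesis (and is in fact the one in \cite{BarrosBatistaCruzSousa}); the paper's approach buys the extension to arbitrary $\Sigma$ at the cost of the extra term $\tfrac{1}{12}\int_\Sigma|\mathring{\Ric_\Sigma}|^2$. For the local and global splitting you follow the same Bray--Brendle--Neves CMC-foliation method as the paper; your treatment is slightly compressed (the step ``normalizing $\rho_t\equiv1$, the metric near $\Sigma$ is $dt^2+g_t$'' hides the verification, made explicit in the paper, that the variation field is normal and that $w(t,x)=t$), but the argument is sound.
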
 

Our purpose in this work is to generalize Theorem \ref{theorem.BarrosBatistaCruzSousa} for manifolds that are not necessarily Einstein. To do so, from the above comments, it is necessary an extra term in \eqref{eq.BarrosBatistaCruzSousa}. Our first result is the following:

\begin{theorem}[Theorem \ref{main.theorem}]\label{main.theorem.introduction}
Let $M^5$ be a Riemannian manifold with scalar curvature $R$ satisfying $\inf_MR>0$ and nonnegative Ricci curvature. If $\Sigma^4$ is a two-sided closed hypersurface embedded in $M^5$ which is locally volume-minimizing, then the volume of $\Sigma$ satisfies
\begin{eqnarray}\label{eq.intro.main.theorem.1}
\Vol(\Sigma)\left(\frac{\inf_MR}{12}\right)^2\le\Vol(\s^4)+\frac{1}{12}\int_\Sigma|\mathring{\Ric_\Sigma}|^2d\sigma,
\end{eqnarray}
where $\mathring{\Ric_\Sigma}$ is the traceless Ricci tensor of $\Sigma$. Furthermore, if equality holds, then $\Sigma$ is isometric to $\s^4$ and $M$ is isometric to $(-\varepsilon,\varepsilon)\times\s^4$ in a neighborhood of $\Sigma$, up to scaling.
\end{theorem}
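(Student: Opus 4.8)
The plan is to reduce \eqref{eq.intro.main.theorem.1} to a sharp conformally invariant inequality on the closed $4$-manifold $\Sigma$ by itself, as in the proof of Theorem \ref{theorem.BarrosBatistaCruzSousa}; the point is that the traceless-Ricci term in \eqref{eq.intro.main.theorem.1} will be supplied by an estimate of Gursky for the total $Q$-curvature, which for a general $\Sigma$ replaces the Einstein hypothesis of Theorem \ref{theorem.BarrosBatistaCruzSousa}. Since $\Sigma$ is locally volume-minimizing it is a minimal and stable hypersurface, so
\[
\int_\Sigma|\nabla f|^2\,d\sigma\ \ge\ \int_\Sigma\bigl(|A|^2+\Ric(\nu,\nu)\bigr)f^2\,d\sigma\qquad\text{for every }f\in C^\infty(\Sigma),
\]
where $A$ is the second fundamental form of $\Sigma$ and $\nu$ is a unit normal. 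Taking $f\equiv1$ and using $\Ric\ge0$ on $M$ forces $A\equiv0$ (so $\Sigma$ is totally geodesic) and $\Ric(\nu,\nu)\equiv0$ along $\Sigma$; the Gauss equation then gives $R_\Sigma=R|_\Sigma\ge\inf_M R=:c>0$ at every point of $\Sigma$, whence
\[
\int_\Sigma R_\Sigma^2\,d\sigma\ \ge\ c\int_\Sigma R_\Sigma\,d\sigma\ \ge\ c^2\,\Vol(\Sigma).
\]

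It then remains to bound $\int_\Sigma R_\Sigma^2\,d\sigma$ from above, and here I would use the following inequality, valid for any closed Riemannian $4$-manifold $(\Sigma,g_\Sigma)$ whose conformal class has positive Yamabe constant $Y([g_\Sigma])$: the Chern-Gauss-Bonnet formula may be written as $\int_\Sigma R_\Sigma^2\,d\sigma=24\int_\Sigma Q_\Sigma\,d\sigma+12\int_\Sigma|\mathring{\Ric_\Sigma}|^2\,d\sigma$ with $Q_\Sigma$ the $Q$-curvature, Gursky's estimate gives $\int_\Sigma Q_\Sigma\,d\sigma\le\tfrac1{24}Y([g_\Sigma])^2$ (with nothing to prove when $\int_\Sigma Q_\Sigma\,d\sigma\le0$), and Aubin's inequality gives $Y([g_\Sigma])\le Y(\s^4)$; since $Y(\s^4)^2=144\,\Vol(\s^4)$ these combine to
\[
\int_\Sigma R_\Sigma^2\,d\sigma\ \le\ 144\,\Vol(\s^4)+12\int_\Sigma|\mathring{\Ric_\Sigma}|^2\,d\sigma .
\]
As $R_\Sigma=R|_\Sigma>0$, the conformal class of $g_\Sigma$ has positive Yamabe constant, so this applies; together with $c^2\Vol(\Sigma)\le\int_\Sigma R_\Sigma^2\,d\sigma$ it gives \eqref{eq.intro.main.theorem.1}.

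Suppose now equality holds in \eqref{eq.intro.main.theorem.1}; then equality holds in each of the steps above. In particular $R_\Sigma\equiv c$, and $Y([g_\Sigma])=Y(\s^4)$, so by the equality case of Aubin's inequality $(\Sigma,g_\Sigma)$ is conformal to a round metric on $\s^4$; since a constant-scalar-curvature metric in the conformal class of a round $\s^4$ is round, $\Sigma$ is isometric to $\s^4$ up to scaling. For the local splitting of $M$ I would follow \cite{BrayBrendleNeves} (see also \cite{MicallefMoraru} and \cite{BarrosBatistaCruzSousa}): the Jacobi operator of $\Sigma$ reduces to $\Delta_\Sigma$, whose kernel consists of the constants, so a neighborhood of $\Sigma$ in $M$ can be foliated by closed constant-mean-curvature hypersurfaces $\{\Sigma_t\}$ with $\Sigma_0=\Sigma$; the local volume-minimality of $\Sigma_0$, the first-variation formula, and $R\ge c$ on $M$ then force every leaf to have vanishing mean curvature, so each $\Sigma_t$ is a stable minimal hypersurface (the lapse of the foliation being a positive Jacobi field) and hence totally geodesic by the argument above, so the metrics induced on the leaves do not depend on $t$ and the metric near $\Sigma$ is the product $dt^2+g_{\s^4}$.

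I expect the crux to be the four-dimensional inequality displayed above, since it is precisely where the relation $\dim M=\dim\Sigma+1=5$ is used and where the correction term in \eqref{eq.intro.main.theorem.1} comes from, and it depends on Gursky's non-elementary conformal estimate, which is special to dimension four; in the Einstein case of Theorem \ref{theorem.BarrosBatistaCruzSousa} the traceless Ricci vanishes and the inequality degenerates to Aubin's bound $Y([g_\Sigma])\le Y(\s^4)$. A secondary, more technical obstacle is ruling out nonzero mean curvature for the leaves of the constant-mean-curvature foliation in the equality case, which is the by-now-standard but delicate maximum-principle/monotonicity argument of \cite{BrayBrendleNeves}, \cite{MicallefMoraru} and \cite{BarrosBatistaCruzSousa}.
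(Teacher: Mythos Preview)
Your argument is correct and shares the paper's overall architecture---stability for a lower bound on $\int_\Sigma R_\Sigma^2$, Gursky plus Gauss--Bonnet--Chern for the upper bound, and the CMC foliation for the splitting---but there is one genuine difference worth recording. You invoke $\Ric\ge0$ at the outset: taking $f\equiv1$ in the stability inequality together with $\Ric\ge0$ forces $A\equiv0$ and $\Ric(\nu,\nu)\equiv0$, so $R_\Sigma=R|_\Sigma\ge c>0$ pointwise, and positivity of the Yamabe class (hence applicability of Gursky) is immediate. The paper does \emph{not} use $\Ric\ge0$ for the inequality; its Proposition~\ref{main.proposition} establishes \eqref{eq.intro.main.theorem.1} for any stable minimal $\Sigma$ assuming only $\inf_M R>0$. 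Without your shortcut one cannot conclude $R_\Sigma>0$ pointwise, so the paper instead conformally rescales $g_\Sigma$ by a power of the first eigenfunction of the stability operator and checks that the new metric has positive scalar curvature, whence $\Y(\Sigma,[g_\Sigma])>0$. Your route is shorter for the theorem as stated; the paper's route yields the stronger intermediate Proposition~\ref{main.proposition} (no Ricci hypothesis). Your $Q$-curvature packaging---Gursky's $\int Q\le\tfrac{1}{24}Y^2$ followed by Aubin's $Y\le Y(\s^4)$---is exactly equivalent to the paper's direct use of \eqref{gursky.aux.1.1} inside Gauss--Bonnet--Chern.

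One small slip in the splitting sketch: the hypothesis that drives $H'(t)\le0$ along the CMC foliation is $\Ric\ge0$, not $R\ge c$. Also, ``totally geodesic leaves $\Rightarrow$ product metric'' still needs the step that the lapse is constant on each leaf (equivalently $w(t,x)=t$); the paper does this by showing $N_t$ is parallel along the normal geodesics, but this is routine.
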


Our second result is the following:

\begin{theorem}[Theorem \ref{main.theorem.2}]\label{main.theorem.introduction.2}
Let $M^5$ be a complete Riemannian manifold with scalar curvature $R$ satisfying $\inf_MR>0$ and nonnegative Ricci curvature. Suppose that $\Sigma^4$ is a two-sided closed manifold immersed in $M^5$ in such a way that $\Sigma$ minimizes the volume in its homotopy class. Then, the volume of $\Sigma$ satisfies
\begin{eqnarray}\label{eq.intro.main.theorem.2}
\Vol(\Sigma)\left(\frac{\inf_MR}{12}\right)^2\le\Vol(\s^4)+\frac{1}{12}\int_\Sigma|\mathring{\Ric_\Sigma}|^2d\sigma.
\end{eqnarray}
Moreover, if equality holds, then $\Sigma$ is isometric to $\s^4$ and the Riemannian cover of $M$ is isometric to $\R\times\s^4$, up to scaling.
\end{theorem}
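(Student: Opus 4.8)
The plan is to bootstrap from Theorem~\ref{main.theorem.introduction}, most of whose proof is intrinsic to $\Sigma$, the only external ingredient being stability. Since $\Sigma$ minimizes the volume in its homotopy class, the two-sided minimal immersion $f\colon\Sigma\to M$ is stable: for a global unit normal $\nu$ along $f$ and any $\varphi\in C^\infty(\Sigma)$, the variation $f_t=\exp_f(t\varphi\nu)$ is a homotopy, so $\Vol(f_t)\ge\Vol(f_0)$ gives $\int_\Sigma|\nabla\varphi|^2\,d\sigma\ge\int_\Sigma(\Ric_M(\nu,\nu)+|A|^2)\varphi^2\,d\sigma$. Taking $\varphi\equiv1$ and using $\Ric_M\ge0$ forces $A\equiv0$ and $\Ric_M(\nu,\nu)\equiv0$ along $\Sigma$, whence the Gauss equation gives $R_\Sigma=R_M\circ f\ge\inf_MR$ pointwise on $\Sigma$. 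Feeding this bound into the Chern--Gauss--Bonnet formula on the closed $4$-manifold $\Sigma$ exactly as in the proof of \eqref{eq.intro.main.theorem.1}, a computation that never uses embeddedness, yields \eqref{eq.intro.main.theorem.2}. (One could equally pass to a suitable cover of $M$ and quote Theorem~\ref{main.theorem.introduction}, but the direct route is simpler since the inequality does not see embeddedness.)

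Now assume equality holds in \eqref{eq.intro.main.theorem.2}. The equality discussion in the proof of Theorem~\ref{main.theorem.introduction} is likewise intrinsic and applies verbatim: it forces $\Sigma$ to be isometric to the round $\s^4$ up to scaling, with $R_M\equiv\inf_MR$ along $\Sigma$; combined with $A\equiv0$ and $\Ric_M(\nu,\cdot)\equiv0$ along $\Sigma$, this means $\Sigma$ is a totally geodesic, degenerate-stable minimal hypersurface. The local rigidity part of Theorem~\ref{main.theorem.introduction} --- the foliated deformation of Bray--Brendle--Neves \cite{BrayBrendleNeves}, see also \cite{MicallefMoraru} --- then produces a minimal foliation $\{\Sigma_t\}_{t\in(-\varepsilon,\varepsilon)}$ near $\Sigma$ with $\Sigma_0=\Sigma$, each $\Sigma_t$ homotopic to $\Sigma$ and of volume $\Vol(\Sigma)$, realizing a neighborhood of $\Sigma$ as $(-\varepsilon,\varepsilon)\times\s^4$ up to scaling.

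It remains to globalize this splitting to the Riemannian cover, following the scheme of the equality case of Theorem~\ref{theorem.BarrosBatistaCruzSousa}. Let $I\subseteq\R$ be the maximal interval supporting such a minimal foliation, with $\iota\colon I\times\s^4\to M$ the associated local isometry. Each leaf $\Sigma_t$ is homotopic to $\Sigma$ and has volume $\Vol(\Sigma)$, so it too minimizes volume in its homotopy class, hence saturates \eqref{eq.intro.main.theorem.2} and is again a round, totally geodesic $\s^4$; applying the local rigidity at each leaf shows $I$ is open, while the completeness of $M$ together with the uniform geometry of the leaves (all isometric to a fixed round $\s^4$, totally geodesic) rules out degeneration at $\partial I$ and allows the foliation to be continued, forcing $I=\R$. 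Thus $\iota\colon\R\times\s^4\to M$ is a local isometry with open and closed image, hence a Riemannian covering onto $M$; since $\R\times\s^4$ is simply connected, it is the Riemannian universal cover of $M$, up to scaling.

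I expect the globalization step to be the main obstacle: one must verify that the leaves remain homotopic to $\Sigma$ --- hence remain volume-minimizing in their homotopy classes --- as the foliation is extended, and that no leaf degenerates at a finite endpoint of $I$. Note that the extended family is only an immersed foliation, which is precisely why the conclusion concerns a cover of $M$ rather than $M$ itself. A minor point is to check that the constructions from Theorem~\ref{main.theorem.introduction}, carried out there for embedded hypersurfaces, transfer to the present immersed $\Sigma$; this is immediate, as they are local.
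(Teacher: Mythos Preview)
Your approach matches the paper's. One refinement worth noting concerns the globalization step you flag as the main obstacle: rather than taking $I$ to be the maximal interval supporting an abstract foliation and then worrying about leaf degeneration at $\partial I$, the paper fixes the explicit map $G(t,x)=\exp_x(tN(x))$ on all of $\R\times\Sigma$ at the outset (this is where completeness of $M$ is used) and sets $I=\{t>0: G|_{(0,t)\times\Sigma}\ \text{is a local isometry}\}$. With $G$ defined in advance, closedness of $I$ is immediate because being a local isometry is a local condition, so your concern about degeneration evaporates; openness is exactly your reapplication of the local rigidity at each leaf $\Sigma_t=G(\{t\}\times\Sigma)$, which is homotopic to $\Sigma$ via $G$ itself and hence volume-minimizing with equality in \eqref{eq.intro.main.theorem.2}.
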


\begin{remark}
{\em The covering map of Theorem \ref{main.theorem.2} is explicit. In fact, it is given by $G(t,x)=\exp_x(tN(x))$, $(t,x)\in\R\times\Sigma$, where $\exp$ is the exponential map of $M$ and $N$ is a unit normal vector field defied on $\Sigma$.}
\end{remark}

\section{Preliminaries}

In this section, we are going to present some terminologies and useful results.

Let $\Sigma^n$ be a connected closed (compact without boundary) manifold of dimension $n\ge3$. Denote by $\mathcal{M}(\Sigma)$ the set of all Riemannian metrics on $\Sigma$. The {\em Einstein-Hilbert functional} $\mathcal{E}:\mathcal{M}(\Sigma)\to\R$ is defined by 
\begin{eqnarray*}
 \E(g)=\dfrac{\int_\Sigma R_gdv_g}{\Vol(\Sigma^n,g)^{\frac{n-2}{n}}},
\end{eqnarray*} 
where $R_g$ is the scalar curvature of $(\Sigma,g)$. Denote by $[g]=\{e^{2f}g:f\in C^\infty(\Sigma)\}$ the conformal class of $g\in\M(\Sigma)$. The {\em Yamabe invariant} of $(\Sigma,[g])$ is defined as the following conformal invariant:
\begin{eqnarray*}
 \Y(\Sigma,[g])=\inf_{\tilde g\in[g]}\E(\tilde g).
\end{eqnarray*}

The classical solution of the Yamabe problem by Yamabe \cite{Yamabe}, Trudinger \cite{Trudinger}, Aubin \cite{Aubin76} (se also \cite{Aubin98}), and Schoen \cite{Schoen} says that every conformal class $[g]$ contains metrics $\hat g$, called {\em Yamabe metrics}, which realize the minimum:
\begin{eqnarray*}
 \E(\hat g)=\Y(\Sigma,[g]).
\end{eqnarray*} 
Such metrics have constant scalar curvature given by 
\begin{eqnarray*}
 R_{\hat g}=\Y(\Sigma^n,[g])\Vol(\Sigma^n,\hat g)^{-\frac{2}{n}}. 
\end{eqnarray*}
Furthermore,
\begin{eqnarray*}
 \Y(\Sigma^n,[g])\le\Y(\s^n,[g_{\can}])
\end{eqnarray*}
and equality holds if and only if $(\Sigma^n,g)$ is conformally diffeomorphic to the standard unit $n$-sphere $\s^n\subset\R^{n+1}$ endued with the canonical metric $g_{\can}$. Therefore, as a consequence of Obata's theorem \cite[Proposition 6.1]{Obata}, if $\Y(\Sigma^n,[g])=\Y(\s^n,[g_{\can}])$ and $g$ has constant scalar curvature, then $(\Sigma^n,g)$ is isometric to $(\s^n,g_{\can})$ up to scaling.

When $n=4$, a very useful tool is the Gauss-Bonnet-Chern formula for the Euler characteristic $\chi(\Sigma)$ of a closed Riemannian manifold $(\Sigma^4,g)$, which reads as follows:
\begin{eqnarray*}
8\pi^2\chi(\Sigma)=\int_\Sigma\left(\frac{1}{4}|W_g|^2+\frac{1}{24}R_g^2-\frac{1}{2}|\mathring{\Ric_g}|^2\right)d\sigma,
\end{eqnarray*}
where $W_g$ and $\mathring{\Ric_g}=\Ric_g-(R_g/n)g$ are the Weyl and the traceless Ricci tensors of $(\Sigma,g)$, respectively.

Before finishing this section, we are going to state two important inequalities proved by Gursky \cite{Gursky}.

\begin{theorem}[Gursky]
Let $(\Sigma^4,g)$ be a closed Riemannian manifold. If $\Sigma$ has nonnegative scalar curvature, then 
\begin{eqnarray}\label{gursky.aux.1.1}
\int_\Sigma|W_g|^2d\sigma\ge32\pi^2(\chi(\Sigma)-2)
\end{eqnarray}
and
\begin{eqnarray}\label{gursky.aux.1.2}
\Y(\Sigma,[g])^2\ge6\left(32\pi^2\chi(\Sigma)-\int_\Sigma|W_g|^2d\sigma\right).
\end{eqnarray}
\end{theorem}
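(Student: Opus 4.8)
The plan is to deduce both inequalities from the Gauss--Bonnet--Chern formula applied not to $g$ itself but to a suitable conformal representative of $[g]$, taking advantage of the fact that in dimension four the Weyl functional $\int_\Sigma|W_g|^2\,d\sigma$ is a conformal invariant (the pointwise density $|W|^2\,dv$ being conformally invariant).

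First I would replace $g$ by a Yamabe metric $\hat g\in[g]$. Since $R_g\ge0$, for every $u>0$ the energy $\E(u^2g)=\big(\int_\Sigma(6|\nabla u|_g^2+R_gu^2)\,dv_g\big)\big(\int_\Sigma u^4\,dv_g\big)^{-1/2}$ is nonnegative, so $\Y(\Sigma,[g])\ge0$; consequently $\hat g$ has constant scalar curvature $R_{\hat g}=\Y(\Sigma,[g])\Vol(\Sigma,\hat g)^{-1/2}\ge0$, and in particular $\int_\Sigma R_{\hat g}^2\,dv_{\hat g}=R_{\hat g}^2\Vol(\Sigma,\hat g)=\Y(\Sigma,[g])^2$.

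Next I would write the Gauss--Bonnet--Chern formula for $(\Sigma^4,\hat g)$ and discard the nonnegative term $\tfrac12\int_\Sigma|\mathring{\Ric_{\hat g}}|^2$, which gives $8\pi^2\chi(\Sigma)\le\tfrac14\int_\Sigma|W_{\hat g}|^2\,dv_{\hat g}+\tfrac1{24}\int_\Sigma R_{\hat g}^2\,dv_{\hat g}$. Replacing $\int_\Sigma|W_{\hat g}|^2\,dv_{\hat g}$ by $\int_\Sigma|W_g|^2\,d\sigma$ (conformal invariance of the Weyl density) and $\int_\Sigma R_{\hat g}^2\,dv_{\hat g}$ by $\Y(\Sigma,[g])^2$ (previous paragraph), then clearing denominators, yields precisely \eqref{gursky.aux.1.2}. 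To obtain \eqref{gursky.aux.1.1} I would feed into \eqref{gursky.aux.1.2} the sharp Yamabe inequality, which together with $R_g\ge0$ gives $0\le\Y(\Sigma,[g])\le\Y(\s^4,[g_{\can}])$, hence $\Y(\Sigma,[g])^2\le\Y(\s^4,[g_{\can}])^2=384\pi^2$ (using $R_{g_{\can}}=12$ and $\Vol(\s^4)=\tfrac{8\pi^2}{3}$); substituting this bound into \eqref{gursky.aux.1.2} gives $32\pi^2\chi(\Sigma)-\int_\Sigma|W_g|^2\,d\sigma\le64\pi^2$, which rearranges to \eqref{gursky.aux.1.1}.

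The elementary part of the argument is essentially bookkeeping; the genuine input is external, and is exactly what makes the statement a theorem to be cited rather than an exercise: the solution of the Yamabe problem (existence of a constant--scalar--curvature metric in $[g]$) and the sharp bound $\Y(\Sigma,[g])\le\Y(\s^4,[g_{\can}])$. Within the elementary part, the only step requiring care is the simultaneous use of two different metrics: the scalar--curvature and traceless--Ricci terms of Gauss--Bonnet must be read off from the Yamabe metric $\hat g$ (so that the former becomes $\Y(\Sigma,[g])^2$), while the Weyl term is transported back to the original metric $g$ through its conformal invariance; conflating the two would break the identification $\int_\Sigma R_{\hat g}^2\,dv_{\hat g}=\Y(\Sigma,[g])^2$. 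I also note that \eqref{gursky.aux.1.2} itself does not really use $R_g\ge0$, whereas for \eqref{gursky.aux.1.1} the hypothesis is essential, since it is only through $\Y(\Sigma,[g])\ge0$ that $\Y(\Sigma,[g])\le\Y(\s^4,[g_{\can}])$ upgrades to the bound on $\Y(\Sigma,[g])^2$.
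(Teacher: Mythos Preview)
The paper does not prove this statement at all: it is stated in the Preliminaries section as a result of Gursky \cite{Gursky} and then used as a black box in the proof of Proposition~\ref{main.proposition}. So there is no ``paper's own proof'' to compare against.

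That said, your argument is correct and is essentially the standard derivation of these inequalities. The identity $\int_\Sigma R_{\hat g}^{2}\,dv_{\hat g}=\Y(\Sigma,[g])^2$ for a Yamabe metric $\hat g$, together with the Gauss--Bonnet--Chern formula for $\hat g$, the conformal invariance of $\int|W|^2\,dv$ in dimension $4$, and the nonnegativity of $\int|\mathring{\Ric_{\hat g}}|^2$, give \eqref{gursky.aux.1.2} exactly as you wrote. Your deduction of \eqref{gursky.aux.1.1} from \eqref{gursky.aux.1.2} via Aubin's sharp bound $\Y(\Sigma,[g])\le\Y(\s^4,[g_{\can}])$, combined with $\Y(\Sigma,[g])\ge0$ (from $R_g\ge0$) to square the inequality, is also correct, including the numerical check $\Y(\s^4,[g_{\can}])^2=12^2\cdot\tfrac{8\pi^2}{3}=384\pi^2$. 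Your closing observation---that \eqref{gursky.aux.1.2} does not actually require $R_g\ge0$, while \eqref{gursky.aux.1.1} genuinely does (to pass from $\Y\le\Y(\s^4)$ to $\Y^2\le\Y(\s^4)^2$)---is accurate and worth making.
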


\begin{remark}
{\em Clearly, \eqref{gursky.aux.1.1} and \eqref{gursky.aux.1.2} are trivial if $\chi(\Sigma)\le 2$ or $\chi(\Sigma)\le0$, respectively.}
\end{remark}

\section{The results}

Let $\Sigma^4$ be a closed hypersurface immersed in a Riemannian manifold $M^5$. Here, we suppose that $\Sigma$ is {\em two-sided}, that is, there exists a unit normal vector field $N$ defined on $\Sigma$.

\begin{proposition}\label{main.proposition}
Let $M^5$ be a Riemannian manifold with scalar curvature $R$ satisfying $\inf_MR>0$ and $\Sigma^4$ be a two-sided closed hypersurface immersed in $M^5$. If $\Sigma$ is stable minimal in $M$, then the volume of $\Sigma$ satisfies
\begin{eqnarray}\label{eq.main.prop}
\Vol(\Sigma)\left(\frac{\inf_MR}{12}\right)^2\le\Vol(\s^4)+\frac{1}{12}\int_\Sigma|\mathring{\Ric_\Sigma}|^2d\sigma,
\end{eqnarray}
where $\mathring{\Ric_\Sigma}$ is the traceless Ricci tensor of $\Sigma$. Furthermore, if equality holds, then
\begin{enumerate}
\item[\rm (i)] $\Sigma$ is isometric to $\s^4$ up to scaling,
\item[\rm (ii)] $\Sigma$ is totally geodesic in $M$,
\item[\rm (iii)] $\Ric(N,N)=0$ and $R=\inf_MR$ on $\Sigma$,
\end{enumerate}
where $\Ric$ is the Ricci tensor of $M$.
\end{proposition}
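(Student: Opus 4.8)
The plan is to combine the stability inequality for $\Sigma$ with the second variation formula, the Gauss equation relating $R$ to $R_\Sigma$, the Yamabe invariant, and Gursky's estimates \eqref{gursky.aux.1.1}--\eqref{gursky.aux.1.2}. First I would write the stability inequality $\int_\Sigma (|\nabla f|^2 - (|A|^2 + \Ric(N,N))f^2)\,d\sigma \ge 0$ for all $f\in C^\infty(\Sigma)$, and the traced Gauss equation, which for a minimal hypersurface gives $R = R_\Sigma + 2\Ric(N,N) + |A|^2$, hence $|A|^2 + \Ric(N,N) = \tfrac12(R - R_\Sigma) + \tfrac12(|A|^2 - \Ric(N,N))$. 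A cleaner route: since $\Sigma$ is minimal, $|A|^2 \ge 0$ and $\Ric(N,N) = \tfrac12(R - R_\Sigma - |A|^2)$, so the potential in the stability operator satisfies $|A|^2 + \Ric(N,N) = \tfrac12(R - R_\Sigma) + \tfrac12|A|^2 \ge \tfrac12(R - R_\Sigma) \ge \tfrac12(\inf_M R - R_\Sigma)$. Plugging the test function $f \equiv 1$ into stability yields $\int_\Sigma (R_\Sigma - \inf_M R)\,d\sigma \ge 0$, i.e. $\int_\Sigma R_\Sigma\,d\sigma \ge \Vol(\Sigma)\inf_M R$ — but this alone is not enough; I need to control $\int R_\Sigma^2$ or the Yamabe invariant, so the conformal Yamabe test function is the right choice instead.

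The key step is to use a Yamabe metric $\hat g = u^2 g$ in the conformal class $[g]$ of the induced metric $g$ on $\Sigma$, with $u>0$, and feed a suitable power of $u$ as the test function into the stability inequality. The conformal Laplacian computation shows that stability with test function $u$ (in dimension $4$, where the conformal Laplacian is $L_g = -6\Delta_g + R_g$) combined with $|A|^2 + \Ric(N,N) \ge \tfrac12(\inf_M R - R_\Sigma)$ produces a lower bound of the form
\begin{eqnarray*}
\Y(\Sigma,[g])\,\Vol(\Sigma,\hat g)^{1/2} = \int_\Sigma R_{\hat g}\,dv_{\hat g} \ge \frac{\inf_M R}{2}\int_\Sigma u^2\,dv_g,
\end{eqnarray*}
together with the constraint $\int_\Sigma u^2\,dv_g \ge \Vol(\Sigma,g)^{1/2}\Vol(\Sigma,\hat g)^{1/2}$ by Hölder (since $\dim\Sigma=4$ and $dv_{\hat g} = u^4\,dv_g$). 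Combining, $\Y(\Sigma,[g]) \ge \tfrac12(\inf_M R)\Vol(\Sigma,g)^{1/2}$, hence $\Y(\Sigma,[g])^2 \ge \tfrac14(\inf_M R)^2\Vol(\Sigma)$. Now I invoke Gursky's \eqref{gursky.aux.1.2}: $\Y(\Sigma,[g])^2 \ge 6(32\pi^2\chi(\Sigma) - \int_\Sigma|W_g|^2)$ — but I want an \emph{upper} bound on $\Vol(\Sigma)$, so instead I use the Gauss--Bonnet--Chern formula to write $32\pi^2\chi(\Sigma) = \int_\Sigma(|W_g|^2 + \tfrac16 R_g^2 - 2|\mathring{\Ric_g}|^2)\,d\sigma$, which rearranges \eqref{gursky.aux.1.2} into $\Y(\Sigma,[g])^2 \ge \int_\Sigma R_g^2\,d\sigma - 12\int_\Sigma|\mathring{\Ric_g}|^2\,d\sigma$ — still the wrong direction. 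The correct tool is that for a Yamabe metric, $\Y(\Sigma,[g])^2 = \big(\int_\Sigma R_{\hat g}\,dv_{\hat g}\big)^2 / \Vol(\hat g) = \int_\Sigma R_{\hat g}^2\,dv_{\hat g}$ (constant scalar curvature), and combining with Gursky's inequality applied to $\hat g$ plus Gauss--Bonnet gives $\Y(\Sigma,[g])^2 \le 6\int_\Sigma(\tfrac16 R_{\hat g}^2)\,dv_{\hat g} + \text{(Weyl/}\mathring{\Ric}\text{ terms)}$; carefully: $32\pi^2\chi = \int(\tfrac14|W|^2 + \tfrac1{24}R_{\hat g}^2 - \tfrac12|\mathring\Ric_{\hat g}|^2)$ and $32\pi^2\chi - \int|W|^2 = \tfrac1{6}\Y^2\cdot$(normalization). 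I would reconcile these constants so that \eqref{gursky.aux.1.2} yields $\Y(\Sigma,[g])^2 \le 6\big(32\pi^2\chi(\Sigma) - \int|W|^2\big)$ with equality data, then substitute Gauss--Bonnet to get $\Y(\Sigma,[g])^2 \le \int_\Sigma R_{\hat g}^2\,dv_{\hat g} + \text{const}\cdot\int|\mathring\Ric|^2$; since $\int R_{\hat g}^2 dv_{\hat g} = \Y^2$ this degenerates, confirming that the genuine input is \eqref{gursky.aux.1.2} read as a lower bound on $\Y^2$ in terms of $\chi$ and $\int|W|^2$, and that I must instead bound $32\pi^2\chi(\Sigma)$ from above using $\chi(\Sigma)\le 2$ is false in general — so the actual argument routes through bounding $\int_\Sigma R_g^2$ via the Yamabe quotient using a non-Yamabe representative.

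Let me restate the clean logic. From stability with the conformal factor as test function I get $\Y(\Sigma,[g]) \ge \tfrac12(\inf_M R)\Vol(\Sigma)^{1/2}$, so $\tfrac{1}{6}\Y(\Sigma,[g])^2 \ge \tfrac{1}{24}(\inf_M R)^2\Vol(\Sigma)$. By \eqref{gursky.aux.1.2}, $\tfrac16\Y(\Sigma,[g])^2 \ge 32\pi^2\chi(\Sigma) - \int_\Sigma|W_g|^2\,d\sigma$. Hmm, that gives a lower bound on the left and I need both inequalities pointing the same way; instead the right move is: the \emph{reverse} — use that $\Y(\Sigma,[g]) \le \Y(\s^4,[g_{\can}])$ is false as an estimate on volume. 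I now see the structure must be: stability $\Rightarrow$ $\Y(\Sigma,[g])^2 \ge 36(\inf_M R/12)^2\Vol(\Sigma)\cdot$const; Gauss--Bonnet--Chern rewrites $\Y(\Sigma,[g])^2$ (using $\Y^2 = \int R_{\hat g}^2 dv_{\hat g}$ for the Yamabe metric and $32\pi^2\chi = \int \tfrac14|W|^2 + \tfrac1{24}\int R_{\hat g}^2 - \tfrac12\int|\mathring\Ric_{\hat g}|^2$) as $\Y(\Sigma,[g])^2 = 24\big(8\pi^2\chi(\Sigma) - \tfrac14\int|W_{\hat g}|^2 + \tfrac12\int|\mathring\Ric_{\hat g}|^2\big)$; then $\int|W|^2 \ge 0$ and the Euler characteristic of a manifold admitting a PSC metric that is also (being $\s^4$-like) has $\chi(\Sigma)\le 2$ only in the equality case. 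Because the honest constant-chasing here is where all the content sits, I will, in the writeup, (1) derive $\Y(\Sigma,[g]) \ge \tfrac12(\inf_M R)\Vol(\Sigma)^{1/2}$ from stability via the conformal Laplacian and Hölder, (2) apply \eqref{gursky.aux.1.2} together with Gauss--Bonnet--Chern and the bound $\chi(\Sigma) \le 2$ — which holds because $\int_\Sigma|W_g|^2 \ge 32\pi^2(\chi(\Sigma)-2)$ from \eqref{gursky.aux.1.1} forces, in combination, exactly \eqref{eq.main.prop} — and (3) track equality: $|A|^2\equiv 0$ (totally geodesic), $\Ric(N,N)\equiv 0$ and $R\equiv\inf_M R$ on $\Sigma$ (from the stability inequality being saturated), $u$ constant so $g$ itself is Yamabe, $\int|W_g|^2 = 32\pi^2(\chi(\Sigma)-2)$ with $\chi(\Sigma)=2$, forcing $W_g\equiv 0$, and finally $\Y(\Sigma,[g]) = \Y(\s^4,[g_{\can}])$ with $g$ of constant scalar curvature, so Obata's theorem gives $(\Sigma,g)\cong(\s^4, g_{\can})$ up to scaling. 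The main obstacle is the precise reconciliation of the numerical constants linking the stability estimate, Gursky's \eqref{gursky.aux.1.2}, the Gauss--Bonnet--Chern coefficients, and the $(\inf_M R/12)^2$ normalization in \eqref{eq.main.prop}; everything else is bookkeeping once those align.
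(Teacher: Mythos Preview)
Your proposal has the right ingredients (stability, Gauss equation, Gauss--Bonnet--Chern, Gursky's estimates, Obata for rigidity), but the route you try to take through a \emph{lower} bound on the Yamabe invariant does not work as written, and the simpler route you abandoned is in fact the correct one.

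\textbf{The gap.} You attempt to prove $\Y(\Sigma,[g]) \ge \tfrac12(\inf_M R)\,\Vol(\Sigma)^{1/2}$ by inserting the conformal factor $u$ (of a Yamabe metric $\hat g = u^2 g$) as the test function in the stability inequality and then using H\"older. Two things break. First, your H\"older step is stated with the wrong sign: Cauchy--Schwarz gives $\int_\Sigma u^2\,dv_g \le \Vol(\Sigma,g)^{1/2}\Vol(\Sigma,\hat g)^{1/2}$, not $\ge$, so it cannot be used to pass from a lower bound on $\int R_{\hat g}\,dv_{\hat g}$ to a lower bound on $\E(\hat g)$. Second, even before H\"older, the mismatch between the coefficient $6$ in the $4$-dimensional conformal Laplacian and the coefficient $1$ on $|\nabla u|^2$ in the stability inequality leaves an uncontrolled term $-2\int R_\Sigma u^2$; the inequality $\int R_{\hat g}\,dv_{\hat g} \ge \tfrac12(\inf_M R)\int u^2$ does not follow. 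No amount of constant-chasing fixes this, because the directions of the inequalities are wrong.

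\textbf{What the paper does instead.} The key observation you discarded too quickly is that the test function $f\equiv 1$ in stability, combined with the Gauss equation, already gives $\int_\Sigma R_\Sigma\,d\sigma \ge (\inf_M R)\,\Vol(\Sigma)$, and then a single application of Cauchy--Schwarz yields
\[
(\inf_M R)^2\,\Vol(\Sigma) \le \int_\Sigma R_\Sigma^2\,d\sigma.
\]
This bounds $\int R_\Sigma^2$ from \emph{below}; the complementary \emph{upper} bound comes from Gauss--Bonnet--Chern rewritten as $\tfrac{1}{24}\int R_\Sigma^2 = 8\pi^2\chi(\Sigma) - \tfrac14\int|W_\Sigma|^2 + \tfrac12\int|\mathring{\Ric_\Sigma}|^2$ together with Gursky's estimate \eqref{gursky.aux.1.1}, which gives $8\pi^2\chi(\Sigma) - \tfrac14\int|W_\Sigma|^2 \le 16\pi^2$. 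Combining yields \eqref{eq.main.prop} exactly. The only role of a conformal change (using the first eigenfunction of the Jacobi operator, not the Yamabe minimizer) is to exhibit a metric of positive scalar curvature in $[g_\Sigma]$, so that Gursky's theorem applies; the Yamabe invariant itself does not enter the chain of inequalities for \eqref{eq.main.prop}.

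Your equality discussion is essentially correct: once equality forces $|A|\equiv 0$, $R\equiv\inf_M R$, $\Ric(N,N)\equiv 0$, $R_\Sigma$ constant, and equality in \eqref{gursky.aux.1.1}, one then invokes \eqref{gursky.aux.1.2} to get $\Y(\Sigma,[g_\Sigma]) = \Y(\s^4,[g_{\can}])$, and Obata's theorem finishes (i). Note, however, that one does not prove or use ``$\chi(\Sigma)\le 2$'' as a standalone fact; Gursky's inequality \eqref{gursky.aux.1.1} is what replaces it.
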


\begin{proof}
Since the left hand side of \eqref{eq.main.prop} and $\int_\Sigma|\mathring{\Ric_\Sigma}|^2d\sigma$ are scaling invariant and $\inf_MR>0$, without loss of generality, we may assume that $\inf_MR=12$. Since $\Sigma$ is stable minimal, the stability inequality says that
\begin{eqnarray}\label{main.prop.aux.1}
\int_\Sigma(|\nabla f|^2-(\Ric(N,N)+|A|^2)f^2)d\sigma\ge0
\end{eqnarray} 
for all $f\in C^\infty(\Sigma)$, where $A$ is the second fundamental form of $\Sigma$ in $M$. Taking $f=1$ above and using the Gauss equation
\begin{eqnarray}\label{main.prop.aux.2}
\frac{1}{2}(R-R_\Sigma+|A|^2)=\Ric(N,N)+|A|^2,
\end{eqnarray}
we have
\begin{eqnarray}\label{main.prop.aux.3}
\int_\Sigma(R+|A|^2)d\sigma\le\int_\Sigma R_\Sigma d\sigma,
\end{eqnarray}
where $R_\Sigma$ is the scalar curvature of $\Sigma$. Therefore, observing that $R\ge12$ by hypothesis, it follows that
\begin{eqnarray}\label{main.prop.aux.4}
12\Vol(\Sigma)\le\int_\Sigma R_\Sigma d\sigma\le\Vol(\Sigma)^{1/2}\left(\int_\Sigma R_\Sigma^2d\sigma\right)^{1/2},
\end{eqnarray}
i.e., 
\begin{eqnarray}\label{main.prop.aux.5}
12^2\Vol(\Sigma)\le\int_\Sigma R_\Sigma^2d\sigma,
\end{eqnarray}
where above we have used the H\"older inequality.

Now, let $\phi\in C^\infty(\Sigma)$, $\phi>0$, be the first eigenfunction of the stability operator $L$ of $\Sigma$, 
\begin{eqnarray}\label{main.prop.aux.6}
L=\Delta+(\Ric(N,N)+|A|^2),
\end{eqnarray}
associated to the first eigenvalue $\lambda=\lambda_1$, that is, 
\begin{eqnarray}\label{main.prop.aux.7}
L\phi+\lambda\phi=0.
\end{eqnarray}
Because $\Sigma$ is stable, $\lambda\ge0$. Denote by $g_\Sigma$ the Riemannian metric on $\Sigma$ induced from $M$ and define a new metric $g=\phi^{2/3}g_\Sigma$. It is well known that the scalar curvatures of $g$ and $g_\Sigma$ are related according to the equations
\begin{eqnarray*}
R_g\phi&=&-6\Delta(\phi^{1/3})+R_\Sigma\phi^{1/3}\\
&=&-2\phi^{-2/3}\Delta\phi+\frac{4}{3}\phi^{-5/3}|\nabla\phi|^2+R_\Sigma\phi^{1/3},
\end{eqnarray*}
which imply
\begin{eqnarray}\label{main.prop.aux.9}
R_g\phi\ge-2\phi^{-2/3}\Delta\phi+R_\Sigma\phi^{1/3}.
\end{eqnarray}
Then, using \eqref{main.prop.aux.2}, \eqref{main.prop.aux.6} and \eqref{main.prop.aux.7} into \eqref{main.prop.aux.9}, we obtain
\begin{eqnarray*}
R_g\phi&\ge&\phi^{1/3}(R+|A|^2+2\lambda)\ge12\phi^{1/3}>0,
\end{eqnarray*}
thus $R_g>0$. In particular, $\Y(\Sigma,[g_\Sigma])>0$. Denoting by $W_\Sigma$ the Weyl tensor of $(\Sigma,g_\Sigma)$ and remembering that $\int_\Sigma|W_\Sigma|^2d\sigma$ is a conformal invariant of $(\Sigma,g_\Sigma)$ in dimension 4, it follows from Gursky's theorem that 
\begin{eqnarray}\label{main.prop.aux.11}
\int_\Sigma|W_\Sigma|^2d\sigma\ge32\pi^2(\chi(\Sigma)-2).
\end{eqnarray}
Then, using \eqref{main.prop.aux.5}, \eqref{main.prop.aux.11} and the Gauss-Bonnet-Chern formula, we have
\begin{eqnarray*}
\Vol(\Sigma)&\le&\frac{1}{6}\left(\frac{1}{24}\int_\Sigma R_\Sigma^2d\sigma\right)\\
&=&\frac{1}{6}\left(8\pi^2\chi(\Sigma)-\frac{1}{4}\int_\Sigma|W_\Sigma|^2d\sigma+\frac{1}{2}\int_\Sigma|\mathring{\Ric_\Sigma}|^2d\sigma\right)\\
&\le&\frac{8}{3}\pi^2+\frac{1}{12}\int_\Sigma|\mathring{\Ric_\Sigma}|^2d\sigma,
\end{eqnarray*}
which imply \eqref{eq.main.prop} because $\Vol(\s^4)=\frac{8}{3}\pi^2$.

If equality in \eqref{eq.main.prop} holds, then we have equality in \eqref{main.prop.aux.1} for $f=1$. Which means that $\lambda_1=0$ and $f=1$ is the first eigenfunction of $L$, i.e., $\Ric(N,N)+|A|^2=0$. On the other hand, equality in \eqref{eq.main.prop} also implies equality in \eqref{main.prop.aux.3} and \eqref{main.prop.aux.4}. Therefore, 
\begin{eqnarray*}
0&\le&\int_\Sigma(R-12)d\sigma\le\int_\Sigma(R+|A|^2)d\sigma-12\Vol(\Sigma)\\
&=&\int_\Sigma R_\Sigma d\sigma-12\Vol(\Sigma)=0,
\end{eqnarray*}
thus $\Sigma$ is totally geodesic and $R=12$ on $\Sigma$. In particular, $\Ric(N,N)=0$ on $\Sigma$. Also, from \eqref{gursky.aux.1.2} we have $R_\Sigma=12$.

To finish, observe that equality in \eqref{eq.main.prop} implies equality in \eqref{main.prop.aux.11}. Therefore, since $\Y(\Sigma,[g_\Sigma])>0$, using Gursky's theorem we obtain
\begin{eqnarray*}
\displaystyle\Y(\s^4,[g_{\can}])^2&\ge&\Y(\Sigma,[g_\Sigma])^2\ge6\left(32\pi^2\chi(\Sigma)-\int_\Sigma|W_\Sigma|^2d\sigma\right)\\
&=&384\pi^2=\Y(\s^4,[g_{\can}])^2.
\end{eqnarray*}
Then, $\Y(\Sigma,[g_\Sigma])=\Y(\s^4,[g_{\can}])$ and $R_\Sigma=12$, which imply by the solution of the Yamabe problem and Obata's theorem that $\Sigma$ is isometric to $\s^4$.
\end{proof}

\begin{remark}
{\em It follows from the above proposition that if equality in \eqref{eq.intro.main.theorem.1} or \eqref{eq.intro.main.theorem.2} holds, then $\Sigma$ is isometric to $\s^4$ up to scaling. In particular, $\Sigma$ is Einstein. In this case, we can use Barros-Batista-Cruz-Sousa's theorem to obtain Theorem \ref{main.theorem.introduction} and Theorem \ref{main.theorem.introduction.2}. But, for the sake of completeness, we are going to present the proofs of these theorems here.}
\end{remark}

Before proving our main results, we are going to state a very useful lemma due to Bray, Brendle, and Neves \cite{BrayBrendleNeves} (see \cite{Nunes} for a more detailed proof). The same technique has been used by many authors in the literature (e.g. \cite{AnderssonCaiGalloway,BarrosBatistaCruzSousa,Cai,MicallefMoraru}).

\begin{lemma}\label{lemma.foliation}
Let $M$ be a Riemannian $5$-manifold. If $\Sigma$ is a two-sided closed minimal hypersurface immersed in $M$ such that $\Ric(N,N)+|A|^2=0$ on $\Sigma$, then there exists a smooth function $w:(-\varepsilon,\varepsilon)\times\Sigma\to\R$, for some $\varepsilon>0$, satisfying
\begin{eqnarray*}
w(0,x)=0,\,\,\frac{\partial w}{\partial t}(0,x)=1\,\,\mbox{ and }\,\,\int_\Sigma(w(t,\cdot)-t)d\sigma=0
\end{eqnarray*}
for all $x\in\Sigma$ and $t\in(-\varepsilon,\varepsilon)$. Furthermore,  
\begin{eqnarray*}
\Sigma_t=\{\exp_x(w(t,x)N(x))\in M:x\in\Sigma\}
\end{eqnarray*}
is a closed hypersurface immersed in $M$ with constant mean curvature for each $t\in(-\varepsilon,\varepsilon)$. Also, if $\Sigma$ is embedded in $M$, then $\{\Sigma_t\}_{t\in(-\varepsilon,\varepsilon)}$ is a foliation of a neighborhood of $\Sigma=\Sigma_0$.
\end{lemma}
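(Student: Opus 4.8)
The plan is to construct $w$ via the implicit function theorem applied to the mean curvature operator on normal graphs over $\Sigma$; the hypothesis $\Ric(N,N)+|A|^2=0$ is precisely what turns the linearized operator into the Laplacian and hence into an isomorphism after a zero-mean normalization. Fix $\alpha\in(0,1)$ and let $X^{k,\alpha}=\{u\in C^{k,\alpha}(\Sigma):\int_\Sigma u\,d\sigma=0\}$. For $t\in\R$ and $u\in C^{2,\alpha}(\Sigma)$ small, set $\Sigma_{t,u}=\{\exp_x((t+u(x))N(x)):x\in\Sigma\}$ and let $H(t,u)\in C^{0,\alpha}(\Sigma)$ be its mean curvature, pulled back to $\Sigma$ by the obvious parametrization; $H$ depends smoothly on $(t,u)$. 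Define
\begin{eqnarray*}
\Phi(t,u)&=&H(t,u)-\frac{1}{\Vol(\Sigma)}\int_\Sigma H(t,u)\,d\sigma\in X^{0,\alpha},
\end{eqnarray*}
so that $\Phi(t,u)=0$ if and only if $\Sigma_{t,u}$ has constant mean curvature. Since $\Sigma=\Sigma_{0,0}$ is minimal, $H(0,0)\equiv0$ and $\Phi(0,0)=0$.

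Next I would compute the linearizations at $(0,0)$. The first variation of the mean curvature along a normal variation of speed $\varphi$ is $\mathcal{L}\varphi=\Delta\varphi+(\Ric(N,N)+|A|^2)\varphi$, where $\Delta=\operatorname{div}\nabla$ on $(\Sigma,g_\Sigma)$ (the precise sign convention is immaterial below). By hypothesis $\Ric(N,N)+|A|^2=0$ on $\Sigma$, so $\mathcal{L}=\Delta$. Hence $D_u\Phi(0,0)\colon X^{2,\alpha}\to X^{0,\alpha}$, $u\mapsto\Delta u$, is a Banach space isomorphism by Schauder theory, since $\int_\Sigma\Delta u\,d\sigma=0$ and $\Delta$ is invertible on functions of vanishing mean. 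Moreover $D_t\Phi(0,0)$ is the normalization of the mean curvature variation along constant speed $1$, namely $(\Ric(N,N)+|A|^2)-\frac{1}{\Vol(\Sigma)}\int_\Sigma(\Ric(N,N)+|A|^2)\,d\sigma=0$.

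By the implicit function theorem there exist $\varepsilon>0$ and a smooth curve $t\mapsto u(t,\cdot)\in X^{2,\alpha}$, $|t|<\varepsilon$, with $u(0,\cdot)=0$ and $\Phi(t,u(t,\cdot))=0$. Put $w(t,x)=t+u(t,x)$. Then $w(0,\cdot)=0$, $\int_\Sigma(w(t,\cdot)-t)\,d\sigma=\int_\Sigma u(t,\cdot)\,d\sigma=0$, and $\Sigma_t:=\Sigma_{t,u(t,\cdot)}$ has constant mean curvature; elliptic regularity for the CMC equation together with the smooth dependence in the implicit function theorem and a bootstrap show that $w$ is smooth. Differentiating $\Phi(t,u(t,\cdot))=0$ at $t=0$ gives $D_t\Phi(0,0)+D_u\Phi(0,0)\cdot\partial_tu(0,\cdot)=0$, i.e. $\Delta(\partial_tu(0,\cdot))=0$, whence $\partial_tu(0,\cdot)=0$ by injectivity of $\Delta$ on $X^{2,\alpha}$, and so $\partial_tw(0,x)=1$. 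Finally, if $\Sigma$ is embedded, shrink $\varepsilon$ so that $\partial_tw>0$ on $(-\varepsilon,\varepsilon)\times\Sigma$; in Fermi coordinates $(s,x)$ about $\Sigma$ the leaf $\Sigma_t$ is the graph $\{s=w(t,x)\}$, and monotonicity of $t\mapsto w(t,x)$ makes these graphs pairwise disjoint and fill a neighborhood of $\Sigma_0$, i.e. $\{\Sigma_t\}_{|t|<\varepsilon}$ is a foliation.

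The crux is the invertibility of $D_u\Phi(0,0)$ onto $X^{0,\alpha}$: this is exactly where the hypothesis $\Ric(N,N)+|A|^2=0$ is essential, since it collapses the Jacobi operator to $\Delta$, whose kernel consists precisely of the constants — which are removed by working in $X^{k,\alpha}$. The same hypothesis forces $D_t\Phi(0,0)=0$, and that is what upgrades the conclusion to $\partial_tw(0,\cdot)\equiv1$ rather than merely $\partial_tw(0,\cdot)>0$; everything else is routine elliptic theory and bookkeeping with the parametrization.
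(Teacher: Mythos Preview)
Your argument is correct and is precisely the standard implicit function theorem approach: the hypothesis $\Ric(N,N)+|A|^2=0$ collapses the Jacobi operator to $\Delta$, which is invertible on zero-mean functions, and everything else follows. Note that the paper itself does not prove this lemma; it only states it and refers to Bray--Brendle--Neves and to Nunes for the proof, and what you have written is exactly the argument found in those references.
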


All entities associated to $\Sigma_t$ will be denoted with a subscript $t$, except the mean curvature which will be denoted by $H(t)$. Furthermore, $\rho_t$ will denote the lapse function $\langle\frac{\partial}{\partial t},N_t\rangle$.

\begin{theorem}\label{main.theorem}
Let $M^5$ be a Riemannian manifold with scalar curvature $R$ satisfying $\inf_MR>0$ and nonnegative Ricci curvature. If $\Sigma^4$ is a two-sided closed hypersurface embedded in $M^5$ which is locally volume-minimizing, then the volume of $\Sigma$ satisfies
\begin{eqnarray}\label{eq.main.theorem}
\Vol(\Sigma)\left(\frac{\inf_MR}{12}\right)^2\le\Vol(\s^4)+\frac{1}{12}\int_\Sigma|\mathring{\Ric_\Sigma}|^2d\sigma.
\end{eqnarray}
Furthermore, if equality holds, then $\Sigma$ is isometric to $\s^4$ and $M$ is isometric to $(-\varepsilon,\varepsilon)\times\s^4$ in a neighborhood of $\Sigma$, up to scaling.
\end{theorem}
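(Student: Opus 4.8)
The plan is to dispatch the inequality immediately and then treat the rigidity by a constant–mean–curvature foliation argument in the spirit of Bray--Brendle--Neves \cite{BrayBrendleNeves}. For \eqref{eq.main.theorem}: a closed hypersurface which is locally volume-minimizing is in particular stable and minimal (the first variation of volume vanishes and the second variation is nonnegative for every variation), so Proposition \ref{main.proposition} applies verbatim and yields \eqref{eq.main.theorem}. Now suppose equality holds. Then Proposition \ref{main.proposition} also gives its conclusions (i)--(iii); after rescaling $M$ so that $\inf_MR=12$, this means $\Sigma$ is isometric to $\s^4$, $\Sigma$ is totally geodesic in $M$, and $\Ric(N,N)=0$, $R=12$ on $\Sigma$. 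In particular $\Ric(N,N)+|A|^2=0$ on $\Sigma$, so I would invoke Lemma \ref{lemma.foliation} to obtain a CMC foliation $\{\Sigma_t\}_{t\in(-\varepsilon,\varepsilon)}$ of a neighborhood of $\Sigma=\Sigma_0$ with lapse function $\rho_t$ (so $\rho_0\equiv 1$, and $\rho_t>0$ everywhere since the $\Sigma_t$ are leaves of a foliation) and mean curvature $H(t)$. Shrinking $\varepsilon$ so that the foliation sits inside a neighborhood in which $\Sigma$ minimizes volume, each leaf is an admissible competitor, hence $V(t):=\Vol(\Sigma_t)\ge V(0)$ for all $|t|<\varepsilon$.

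The heart of the argument is to combine two facts. First, the first variation of volume (the tangential part of the deformation field drops out, and $H(t)$ is constant on $\Sigma_t$) gives $V'(t)=H(t)\int_{\Sigma_t}\rho_t\,d\sigma_t$, so the sign of $V'(t)$ agrees with that of $H(t)$ near $0$. Second, differentiating the leafwise-constant function $H(t)$ and integrating the mean-curvature evolution equation over $\Sigma_t$ yields
\[
H'(t)\,\Vol(\Sigma_t)=-\int_{\Sigma_t}\big(\Ric(N_t,N_t)+|A_t|^2\big)\rho_t\,d\sigma_t\le 0,
\]
where the inequality is precisely the place where the hypothesis $\Ric\ge 0$ (with $\rho_t>0$) is used. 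Thus $H$ is non-increasing; since $H(0)=0$, we get $H\le 0$ on $[0,\varepsilon)$ and $H\ge 0$ on $(-\varepsilon,0]$, so $V$ is non-increasing on $[0,\varepsilon)$ and non-decreasing on $(-\varepsilon,0]$, i.e. $V(t)\le V(0)$ for all $|t|<\varepsilon$. Together with $V(t)\ge V(0)$ this forces $V\equiv V(0)$, hence $V'\equiv 0$, hence (using $\int_{\Sigma_t}\rho_t>0$) $H\equiv 0$: the foliation is by minimal hypersurfaces.

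Finally, $H\equiv 0$ gives $H'\equiv 0$, so the displayed identity forces $\Ric(N_t,N_t)+|A_t|^2\equiv 0$ on every leaf (a product of nonnegative factors integrating to zero), whence $A_t\equiv 0$ (each leaf totally geodesic) and $\Ric(N_t,N_t)\equiv 0$. The evolution equation then reduces to $\Delta_{\Sigma_t}\rho_t=0$, so $\rho_t$ is constant on the closed manifold $\Sigma_t$. I would then reparametrize $t$ so that $\rho_t\equiv 1$ and compose with a $t$-dependent diffeomorphism of $\Sigma$ to remove the tangential component of the deformation field, turning it into the unit-speed normal geodesic flow off $\Sigma$; in the resulting Fermi coordinates the metric has the form $dt^2+g_t$, and since the level sets are totally geodesic their induced metrics $g_t$ are independent of $t$. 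Hence a neighborhood of $\Sigma$ is isometric to $(-\varepsilon,\varepsilon)\times\Sigma$, which by (i) is $(-\varepsilon,\varepsilon)\times\s^4$ up to scaling, as claimed.

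The routine ingredients here (first variation, the mean-curvature evolution equation, the conformal behaviour of scalar curvature already used in Proposition \ref{main.proposition}) are standard, and the final local-splitting step is the usual mechanism from \cite{BrayBrendleNeves,MicallefMoraru}; I expect that step to be the most delicate in terms of bookkeeping. The one genuinely new observation is that the hypothesis $\Ric\ge 0$ makes $H(t)$ monotone, which is exactly what upgrades the one-sided bound $V\ge V(0)$ to $V\equiv V(0)$ and thereby drives the whole rigidity conclusion.
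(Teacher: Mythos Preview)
Your proposal is correct and follows the same route as the paper: Proposition \ref{main.proposition} for the inequality and the infinitesimal rigidity on $\Sigma$, Lemma \ref{lemma.foliation} for the CMC foliation, monotonicity of $H$ from $\Ric\ge 0$, the volume comparison forcing $H\equiv 0$, and then the local splitting. Two small technical differences are worth noting. First, instead of integrating the evolution equation directly, the paper divides by $\rho_t$ before integrating, obtaining
\[
H'(t)\int_{\Sigma_t}\frac{1}{\rho_t}\,d\sigma_t=-\int_{\Sigma_t}\frac{|\nabla_t\rho_t|^2}{\rho_t^2}\,d\sigma_t-\int_{\Sigma_t}\big(\Ric(N_t,N_t)+|A_t|^2\big)\,d\sigma_t\le 0,
\]
so that $H'(t)=0$ simultaneously gives $\rho_t$ constant and $A_t\equiv 0$; your direct integration (using $\int_{\Sigma_t}\Delta_t\rho_t=0$) is equally valid but recovers $\rho_t$ constant only after going back to the pointwise equation. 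Second, for the final splitting the paper does not reparametrize or quotient out a tangential drift: it shows that $t\mapsto N_t(x)$ is parallel along $t\mapsto G(t,x)$ (using $A_t\equiv 0$ and $\rho_t$ constant), identifies it with the geodesic direction $d(\exp_x)_{w(t,x)N(x)}N(x)$, deduces $\rho_t=\partial w/\partial t$, and then uses the normalization $\int_\Sigma(w(t,\cdot)-t)\,d\sigma=0$ built into Lemma \ref{lemma.foliation} to conclude $w(t,x)=t$, so that $G(t,x)=\exp_x(tN(x))$ is already the desired isometry. Your ``reparametrize and remove the tangential part'' sketch would also work, but the paper's argument avoids that bookkeeping entirely.
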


\begin{proof}
Inequality \eqref{eq.main.theorem} follows immediately from Proposition \ref{main.proposition}, since all locally volume-minimizing hypersurfaces are stable minimal. Also, if equality in \eqref{eq.main.theorem} holds, then $\Sigma$ is isometric to $\s^4$ up to scaling and $\Ric(N,N)=0=|A|^2$ on $\Sigma$. In particular, we can use Lemma \ref{lemma.foliation}. It is well known that 
\begin{eqnarray*}
\frac{dH}{dt}(t)=-\Delta_t\rho_t-(\Ric(N_t,N_t)+|A_t|^2)\rho_t.
\end{eqnarray*}
Since $\rho_0=1$ and $\Sigma$ is compact, we may assume that $\rho_t>0$ for all $t\in(-\varepsilon,\varepsilon)$. Therefore, using that $\Ric\ge0$ and $H'(t)$ is constant on $\Sigma_t$, we have 
\begin{eqnarray}
H'(t)\int_{\Sigma_t}\frac{1}{\rho_t}d\sigma_t&=&-\int_{\Sigma_t}\frac{\Delta_t\rho_t}{\rho_t}d\sigma_t-\int_{\Sigma_t}(\Ric(N_t,N_t)+|A_t|^2)d\sigma_t\label{main.theorem.aux.2}\\
&\le&-\int_{\Sigma_t}\frac{|\nabla_t\rho_t|^2}{\rho_t^2}d\sigma_t\le0,\label{main.theorem.aux.3}
\end{eqnarray}
which imply $H'(t)\le0$ for all $t\in(-\varepsilon,\varepsilon)$, and then 
\begin{eqnarray}\label{main.theorem.aux.4}
H(t)\le0\le H(-t)\,\,\mbox{ for all }\,\,t\in[0,\varepsilon),
\end{eqnarray}
because $H(0)=0$. On the other hand, the first variation formula says that 
\begin{eqnarray}\label{main.theorem.aux.5}
\frac{d}{dt}\Vol(\Sigma_t)=\int_{\Sigma_t}H(t)\rho_td\sigma_t.
\end{eqnarray}
Then, \eqref{main.theorem.aux.4} and \eqref{main.theorem.aux.5} imply 
\begin{eqnarray*}
\Vol(\Sigma_t)\le\Vol(\Sigma_0)\,\,\mbox{ for all }\,\,t\in(-\varepsilon,\varepsilon).
\end{eqnarray*}
But, since $\Sigma=\Sigma_0$ is locally volume-minimizing, we have $\Vol(\Sigma_t)=\Vol(\Sigma)$ for all $t\in(-\varepsilon,\varepsilon)$, for a smaller $\varepsilon>0$ if necessary. Therefore, 
\begin{eqnarray*}
0=\frac{d}{dt}\Vol(\Sigma_t)=\int_{\Sigma_t}H(t)\rho_td\sigma_t
\end{eqnarray*} 
and \eqref{main.theorem.aux.4} imply $H(t)=0$ for all $t\in(-\varepsilon,\varepsilon)$. Using $H'(t)=0$ into \eqref{main.theorem.aux.2} and \eqref{main.theorem.aux.3}, we conclude that $\rho_t$ is constant on $\Sigma_t$ and $\Sigma_t$ is totally geodesic in $M$ for each $t\in(-\varepsilon,\varepsilon)$.

Now, we want to prove that $t\longmapsto N_t(x)$ is a parallel vector field along to the curve $t\longmapsto G(t,x)=\exp_x(w(t,x)N(x))$ for each $x\in\Sigma$. In fact, choosing a local coordinate system $x=(x_1,x_2,x_3,x_4)$ on $\Sigma$, we have
\begin{eqnarray*}
\left\langle\nabla_\frac{\partial G}{\partial t}N_t,\frac{\partial G}{\partial x_i}\right\rangle&=&-\left\langle N_t,
\frac{\partial^2 G}{\partial x_i\partial t}\right\rangle=-\frac{\partial}{\partial x_i}\left\langle N_t,\frac{\partial G}{\partial t}\right\rangle+
\left\langle\nabla_\frac{\partial G}{\partial x_i}N_t,\frac{\partial G}{\partial t}\right\rangle\\
&=&-\frac{\partial}{\partial x_i}\rho_t=0.
\end{eqnarray*} 
Above we have used that $\nabla_{\frac{\partial G}{\partial x_i}}N_t=0$ since $\Sigma_t$ is totally geodesic.
Also,
\begin{eqnarray*}
\left\langle\nabla_\frac{\partial G}{\partial t}N_t,N_t\right\rangle=\frac{1}{2}\frac{\partial}{\partial t}\langle N_t,N_t\rangle=0.
\end{eqnarray*} 
Thus, $N_t$ is parallel.

On the other hand, we know that $t\longmapsto d(\exp_x)_{w(t,x)N(x)}N(x)$ is also parallel along to $t\longmapsto G(t,x)$. Then, $N_t(x)=(d\exp_x)_{w(t,x)N(x)}N(x)$ by uniqueness of parallel vector fields, since $w(0,x)=0$ and $N_0(x)=N(x)$. In particular, 
\begin{eqnarray*}
\rho_t=\left\langle\frac{\partial G}{\partial t},N_t\right\rangle=\frac{\partial w}{\partial t}.
\end{eqnarray*}
Now, because $\int_\Sigma(w(t,\cdot)-t)d\sigma=0$ and $\rho_t=\frac{\partial w}{\partial t}$ is constant on $\Sigma_t$, we obtain
\begin{eqnarray*}
0=\frac{d}{dt}\int_\Sigma(w(t,\cdot)-t)d\sigma=\int_\Sigma\left(\frac{\partial w}{\partial t}-1\right)d\sigma=\left(\frac{\partial w}{\partial t}
-1\right)\Vol(\Sigma), 
\end{eqnarray*} 
which imply $\frac{\partial w}{\partial t}(t,x)=1$ for all $(t,x)\in(-\varepsilon,\varepsilon)\times\Sigma$. Finally, because $\frac{\partial w}{\partial t}(0,x)=1$, we have $w(t,x)=t$ for all $(t,x)\in(-\varepsilon,\varepsilon)\times\Sigma$. Therefore, $G(t,x)=\exp_x(tN(x))$ and we can easily check that $G$ is an isometry from $(-\varepsilon,\varepsilon)\times\Sigma$ to a neighborhood of $\Sigma$ in $M$.
\end{proof}

\begin{remark}\label{remark}
{\em Supposing that $\Sigma$ is immersed instead of embedded into $M$ in the above theorem, it follows from the same proof that $\Sigma$ is isometric to $\s^4$ up to scaling and $G$ is a local isometry from $(-\varepsilon,\varepsilon)\times\Sigma$ to $M$, if equality in \eqref{eq.main.theorem} holds.}
\end{remark}

The proof presented below is essentially the same as in \cite{BarrosBatistaCruzSousa}, \cite{BrayBrendleNeves}, and \cite{Nunes}.

\begin{theorem}\label{main.theorem.2}
Let $M^5$ be a complete Riemannian manifold with scalar curvature $R$ satisfying $\inf_MR>0$ and nonnegative Ricci curvature. Suppose that $\Sigma^4$ is a two-sided closed manifold embedded in $M^5$ in such a way that $\Sigma$ minimizes the volume in its homotopy class. Then, the volume of $\Sigma$ satisfies
\begin{eqnarray}\label{eq.main.theorem.2}
\Vol(\Sigma)\left(\frac{\inf_MR}{12}\right)^2\le\Vol(\s^4)+\frac{1}{12}\int_\Sigma|\mathring{\Ric_\Sigma}|^2d\sigma,
\end{eqnarray}
Moreover, if equality holds, then $\Sigma$ is isometric to $\s^4$ and the Riemannian cover of $M$ is isometric to $\R\times\s^4$, up to scaling.
\end{theorem}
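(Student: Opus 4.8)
The plan is to deduce \eqref{eq.main.theorem.2} immediately from Proposition \ref{main.proposition}, since a hypersurface minimizing volume in its homotopy class is in particular stable and minimal, and then to concentrate on the equality case. Assume equality holds. By Proposition \ref{main.proposition} together with the scaling invariance of \eqref{eq.main.theorem.2}, after rescaling $M$ I may assume $\inf_M R=12$, that $(\Sigma,g_\Sigma)$ is the round unit $\s^4$, that $\Sigma$ is totally geodesic in $M$, and that $\Ric(N,N)=0$ on $\Sigma$; in particular $\Ric(N,N)+|A|^2=0$, so Lemma \ref{lemma.foliation} and the argument of Theorem \ref{main.theorem} (in the immersed form of Remark \ref{remark}) apply around $\Sigma$.

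Since $M$ is complete, the map $F\colon\R\times\Sigma\to M$, $F(t,x)=\exp_x(tN(x))$, is well defined and smooth; I equip $\R\times\Sigma$ with the product metric $dt^2+g_\Sigma$ and aim to show that $F$ is a Riemannian covering map. The crucial point is that $F$ is a local isometry. I would set $T^*=\sup\{T>0: F|_{(-T,T)\times\Sigma}\text{ is a local isometry}\}$; by Remark \ref{remark} this set is nonempty, so $T^*>0$, and the goal is to prove $T^*=\infty$.

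Suppose $T^*<\infty$. Being a linear isometry is a closed condition on $dF$, so the locus where $F$ is a local isometry is closed and hence contains $[-T^*,T^*]\times\Sigma$; in particular each $dF_{(T^*,x)}$ is a linear isomorphism, so $\Sigma_{T^*}:=F(\{T^*\}\times\Sigma)$ is a closed two-sided immersed hypersurface with unit normal $N_{T^*}=dF(\partial_t)$, whose induced metric is the $t=T^*$ slice of $F^*g=dt^2+g_\Sigma$ and is therefore $g_\Sigma$, so $\Sigma_{T^*}$ is isometric to $\s^4$. Because $F^*g$ is the product metric on $(-T^*,T^*)\times\Sigma$, each leaf $\Sigma_t$ with $|t|<T^*$ is totally geodesic with $\Ric(N_t,N_t)=0$; letting $t\to T^*$ and using smoothness of $F$ gives that $\Sigma_{T^*}$ is totally geodesic and $\Ric(N_{T^*},N_{T^*})+|A_{T^*}|^2=0$. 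Furthermore $F|_{[0,T^*]\times\Sigma}$ is a free homotopy from $\Sigma$ to $\Sigma_{T^*}$ and $\Vol(\Sigma_{T^*})=\Vol(\Sigma)$, so $\Sigma_{T^*}$ also minimizes volume in its homotopy class and is thus locally volume-minimizing. Applying Lemma \ref{lemma.foliation} at $\Sigma_{T^*}$ and rerunning the argument of Theorem \ref{main.theorem} (Remark \ref{remark}) produces $\varepsilon'>0$ with $G^{T^*}(s,y)=\exp_y(sN_{T^*}(y))$ a local isometry from $(-\varepsilon',\varepsilon')\times\Sigma_{T^*}$ into $M$; since the unit-speed geodesic $t\mapsto F(t,x)$ reaches $\Sigma_{T^*}$ at $F(T^*,x)$ with velocity $N_{T^*}$, one gets $F(T^*+s,x)=G^{T^*}(s,F(T^*,x))$, and similarly near $-T^*$, so $F$ is a local isometry on $(-T^*-\varepsilon',T^*+\varepsilon')\times\Sigma$ — contradicting the definition of $T^*$. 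Hence $T^*=\infty$.

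Finally, $\R\times\Sigma$ with the product metric is complete, so the local isometry $F$ from it into the connected manifold $M$ is automatically a surjective Riemannian covering; and $\R\times\Sigma\cong\R\times\s^4$ is simply connected, hence it is the universal Riemannian cover. This gives that the Riemannian cover of $M$ is isometric, up to scaling, to $\R\times\s^4$, the covering map being $G(t,x)=\exp_x(tN(x))$ as claimed in the remark. I expect the continuation step to be the main obstacle: one must control the limit of the leaves $\Sigma_t$ as $t\to T^*$ — that $\Sigma_{T^*}$ is a nondegenerate immersed hypersurface inheriting total geodesy, the round metric, and the Ricci condition — and verify that the foliation freshly produced by Lemma \ref{lemma.foliation} at $\Sigma_{T^*}$ glues with the existing family so as to genuinely extend $F$ beyond $T^*$.
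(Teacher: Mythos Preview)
Your proof is correct and follows essentially the same route as the paper: the inequality comes from the stability argument of Proposition~\ref{main.proposition}, and the equality case is handled by a continuity argument showing that $G(t,x)=\exp_x(tN(x))$ is a local isometry on all of $\R\times\Sigma$, using Remark~\ref{remark} to extend past each leaf since $\Sigma_{T^*}$ is again homotopic to $\Sigma$, volume-minimizing, and isometric to $\s^4$. The only cosmetic difference is that the paper phrases the continuation as an open--closed connectedness argument on the set of $t$ up to which $G$ is a local isometry, whereas you argue by contradiction on the supremum $T^*$; these are equivalent.
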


\begin{proof}
Inequality \eqref{eq.main.theorem.2} follows directly from Theorem \ref{main.theorem}. Suppose that equality in \eqref{eq.main.theorem.2} holds and define $G:\R\times\Sigma\to M$ by $G(t,x)=\exp_x(tN(x))$. We claim that $G$ is a local isometry. In fact, define $I=\{t>0:G|_{(0,t)\times\Sigma}\,\,\mbox{ is a local isometry}\}$ and observe that Remark \ref{remark} implies $\Sigma$ to be isometric to $\s^4$ up to scaling and $I\neq\emptyset$. In particular, $\mathring{\Ric_\Sigma}=0$. On the other hand, it is not difficult to see that $I$ is closed in $(0,\infty)$. In fact, suppose that $t_k\in I$ converges to $t\in(0,\infty)$. If $t\le t_k$ for some $k$ then $t\in I$ because $(0,t)\times\Sigma\subset(0,t_k)\times\Sigma$ and $G|_{(0,t_k)\times\Sigma}$ is a local isometry. Otherwise, if $t_k<t$ for all $k$ then $\bigcup_k(0,t_k)\times\Sigma=(0,t)\times\Sigma$ implies that $t\in I$ because $G|_{(0,t_k)\times\Sigma}$ is a local isometry (which is a local property) for each $k$. Let us prove that $I$ is also open. Given $t\in I$, we have that $\Sigma_t=G(t,\Sigma)$ is homotopic to $\Sigma$ in $M$, $\Vol(\Sigma_t)=\Vol(\Sigma)$, and $\mathring{\Ric_{\Sigma_t}}=0$, because $G:\{t\}\times\Sigma\to\Sigma_t$ is a local isometric. In particular, $\Sigma_t$ minimizes the volume in its homotopy class and attains the equality in \eqref{eq.main.theorem.2}. Therefore, it follows from Remark \ref{remark} that there exists $\varepsilon>0$ such that $G|_{(0,t+\varepsilon)\times\Sigma}$ is a local isometry. This proves that $I$ is open in $(0,\infty)$. Thus, $I=(0,\infty)$, i.e., $G|_{(0,\infty)\times\Sigma}$ is a local isometry. Analogously, we can prove that $G|_{(-\infty,0)\times\Sigma}$ is a local isometry. This, together with Remark \ref{remark}, implies that $G$ is a local isometry. In particular, $G$ is a covering map.
\end{proof}


\bibliographystyle{amsplain}
\bibliography{bibliography}

\providecommand{\bysame}{\leavevmode\hbox to3em{\hrulefill}\thinspace}
\providecommand{\MR}{\relax\ifhmode\unskip\space\fi MR }
\providecommand{\MRhref}[2]{%
  \href{http://www.ams.org/mathscinet-getitem?mr=#1}{#2}
}
\providecommand{\href}[2]{#2}
\begin{thebibliography}{10}

\bibitem{AnderssonCaiGalloway}
L.~Andersson, M.~Cai, and G.~J. Galloway, \emph{Rigidity and positivity of mass
  for asymptotically hyperbolic manifolds}, Ann. Henri Poincar\'e \textbf{9}
  (2008), no.~1, 1--33. \MR{2389888}

\bibitem{Aubin76}
T.~Aubin, \emph{\'{E}quations diff\'erentielles non lin\'eaires et probl\`eme
  de {Y}amabe concernant la courbure scalaire}, J. Math. Pures Appl. (9)
  \textbf{55} (1976), no.~3, 269--296. \MR{0431287}

\bibitem{Aubin98}
\bysame, \emph{Some nonlinear problems in {R}iemannian geometry}, Springer
  Monographs in Mathematics, Springer-Verlag, Berlin, 1998. \MR{1636569}

\bibitem{BarrosBatistaCruzSousa}
A.~Barros, C.~Cruz, R.~Batista, and P.~Sousa, \emph{Rigidity in dimension four
  of area-minimising {E}instein manifolds}, Math. Proc. Cambridge Philos. Soc.
  \textbf{158} (2015), no.~2, 355--363. \MR{3310250}

\bibitem{BrayBrendleNeves}
H.~Bray, S.~Brendle, and A.~Neves, \emph{Rigidity of area-minimizing
  two-spheres in three-manifolds}, Comm. Anal. Geom. \textbf{18} (2010), no.~4,
  821--830. \MR{2765731}

\bibitem{Cai}
M.~Cai, \emph{Volume minimizing hypersurfaces in manifolds of nonnegative
  scalar curvature}, Minimal surfaces, geometric analysis and symplectic
  geometry ({B}altimore, {MD}, 1999), Adv. Stud. Pure Math., vol.~34, Math.
  Soc. Japan, Tokyo, 2002, pp.~1--7. \MR{1925731}

\bibitem{GromovLawson}
M.~Gromov and H.~B. Lawson, Jr., \emph{The classification of simply connected
  manifolds of positive scalar curvature}, Ann. of Math. (2) \textbf{111}
  (1980), no.~3, 423--434. \MR{577131}

\bibitem{Gursky}
M.~J. Gursky, \emph{Locally conformally flat four- and six-manifolds of
  positive scalar curvature and positive {E}uler characteristic}, Indiana Univ.
  Math. J. \textbf{43} (1994), no.~3, 747--774. \MR{1305946}

\bibitem{HangWang}
F.~Hang and X.~Wang, \emph{Rigidity theorems for compact manifolds with
  boundary and positive {R}icci curvature}, J. Geom. Anal. \textbf{19} (2009),
  no.~3, 628--642. \MR{2496569 (2010k:53065)}

\bibitem{MarquesNeves}
F.~C. Marques and A.~Neves, \emph{Rigidity of min-max minimal spheres in
  three-manifolds}, Duke Math. J. \textbf{161} (2012), no.~14, 2725--2752.
  \MR{2993139}

\bibitem{MicallefMoraru}
M.~Micallef and V.~Moraru, \emph{Splitting of 3-manifolds and rigidity of
  area-minimising surfaces}, Proc. Amer. Math. Soc. \textbf{143} (2015), no.~7,
  2865--2872. \MR{3336611}

\bibitem{Nunes}
I.~Nunes, \emph{Rigidity of area-minimizing hyperbolic surfaces in
  three-manifolds}, J. Geom. Anal. \textbf{23} (2013), no.~3, 1290--1302.
  \MR{3078354}

\bibitem{Obata}
M.~Obata, \emph{The conjectures on conformal transformations of {R}iemannian
  manifolds}, J. Differential Geometry \textbf{6} (1971/72), 247--258.
  \MR{0303464}

\bibitem{Schoen}
R.~Schoen, \emph{Conformal deformation of a {R}iemannian metric to constant
  scalar curvature}, J. Differential Geom. \textbf{20} (1984), no.~2, 479--495.
  \MR{788292}

\bibitem{Toponogov}
V.~A. Toponogov, \emph{Evaluation of the length of a closed geodesic on a
  convex surface}, Dokl. Akad. Nauk SSSR \textbf{124} (1959), 282--284.
  \MR{0102055 (21 \#850)}

\bibitem{Trudinger}
N.~S. Trudinger, \emph{Remarks concerning the conformal deformation of
  {R}iemannian structures on compact manifolds}, Ann. Scuola Norm. Sup. Pisa
  (3) \textbf{22} (1968), 265--274. \MR{0240748}

\bibitem{Yamabe}
H.~Yamabe, \emph{On a deformation of {R}iemannian structures on compact
  manifolds}, Osaka Math. J. \textbf{12} (1960), 21--37. \MR{0125546}

\end{thebibliography}

\end{document}